\providecommand{\keywords}[1]{\textbf{Keywords:} #1}
\definecolor{listinggray}{gray}{0.9}
\definecolor{lbcolor}{rgb}{0.9,0.9,0.9}
\newtheorem{teo}{Theorem}[section]
\newtheorem{lema}{Lemma}[section]
\newtheorem{coro}{Corollary}[section]
\newcommand{\e}{\mathds{E}}
\newcommand{\p}{\mathds{P}}
\begin{document}



\title{Bounds for a binomial sum involving powers of the summation index}
\author{Eliardo G. Costa\\
  Departamento de Estat\'istica, Universidade de S\~ao Paulo, S\~ao Paulo, Brazil\\
  \texttt{eliardo@ime.usp.br}}

\maketitle







\abstract{Recently, the properties of a binomial sum related to the multi-link inverted pendulum enumeration problem have been studied. In this note, we establish bounds for this binomial sum.}\\



\keywords{multi-link inverted pendulum, random variable, expectation value.}


\section{Introduction}
The binomial sum $S_p(n)$ defined by
\begin{equation*}
S_p(n)=\sum_{j=1}^nj^p{n+j\choose j}
\end{equation*}

\noindent
has been studied in the literature because it is related to the multi-link inverted pendulum enumeration problem and it is important to know its properties (see \cite{ParisLarcombe2012} and references therein, for example). In this note, we provide bounds for $S_p(n)$ with $p$ a real positive number using a stochastic approach. Throughout this note $\p(.)$ and $\e[.]$ will denote the probability and expectation operator, respectively. 

\section{Results}
The key of the problem to obtain the bounds is to realize that the sum $S_p(n)$ may be write as a expectation value of a random variable with some adjustments, and using the following Lemma we obtain the result. 
\begin{lema}\label{bounds}
Let $g(x)>0$ be an even function and nondecreasing on $[0,\infty)$. Suppose that $\e[g(X)]<\infty$. Then for any $x>0$
\begin{equation*}
\frac{\e[g(X)]-g(x)}{a.s.sup\ g(X)}\leq\p(|X|\geq x)\leq\frac{\e[g(X)]}{g(x)},
\end{equation*}

\noindent
where $a.s.sup\ g(X)=\inf\{t:\p(g(X)>t)=0\}$.
\end{lema}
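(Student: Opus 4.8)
The plan is to derive both inequalities from a single decomposition of $\e[g(X)]$, using throughout that $g$ is even, so that $g(X)=g(|X|)$, and nondecreasing on $[0,\infty)$.

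For the upper bound I would first note the event inclusion $\{|X|\geq x\}\subseteq\{g(X)\geq g(x)\}$: since $|X|\geq x\geq 0$ and $g$ is nondecreasing on $[0,\infty)$ we get $g(|X|)\geq g(x)$, and $g(|X|)=g(X)$ by evenness. Monotonicity of probability then gives $\p(|X|\geq x)\leq\p(g(X)\geq g(x))$, and applying Markov's inequality to the nonnegative variable $g(X)$ yields $\p(g(X)\geq g(x))\leq\e[g(X)]/g(x)$. Chaining these two bounds is exactly the right-hand inequality.

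For the lower bound, set $M=a.s.sup\ g(X)$ and split
\[
\e[g(X)]=\e\big[g(X)\oneb(|X|\geq x)\big]+\e\big[g(X)\oneb(|X|<x)\big].
\]
On the event $\{|X|\geq x\}$ the definition of the essential supremum gives $g(X)\leq M$ almost surely, so the first term is at most $M\,\p(|X|\geq x)$; on $\{|X|<x\}$ monotonicity gives $g(X)=g(|X|)\leq g(x)$, so the second term is at most $g(x)\,\p(|X|<x)\leq g(x)$. Hence $\e[g(X)]\leq M\,\p(|X|\geq x)+g(x)$, and solving for $\p(|X|\geq x)$ produces the left-hand inequality.

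I expect the only delicate point to be the treatment of $M$: one must justify $g(X)\leq M$ almost surely directly from $M=\inf\{t:\p(g(X)>t)=0\}$, and separately observe that when $M=\infty$ the lower bound reads $0\leq\p(|X|\geq x)$ and holds trivially, so no division by infinity arises. Everything else is routine, and the weak inequalities make the behavior at the boundary $|X|=x$ immaterial.
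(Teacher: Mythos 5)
Your proof is correct. The paper does not actually prove this lemma---it simply cites Lin and Bai, \emph{Probability inequalities}, p.~52---and your argument (the inclusion $\{|X|\geq x\}\subseteq\{g(X)\geq g(x)\}$ plus Markov for the upper bound, and the decomposition of $\e[g(X)]$ over $\{|X|\geq x\}$ and its complement for the lower bound, with the correct handling of the case $a.s.sup\ g(X)=\infty$) is the standard one and fills that gap completely.
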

\begin{proof}
see \cite[~pg. 52]{ProbIneq}.
\end{proof}

\begin{teo}\label{S-bounds}
For $p>0$, we have $n^p{2n\choose n}\leq S_p(n)\leq \left[n^p\frac{2n+1}{n+1}+n\right]{2n\choose n}$.
\end{teo}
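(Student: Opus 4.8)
The driving observation, flagged by the author just before the statement, is that $S_p(n)$ is a rescaled moment. My plan is to introduce the integer-valued random variable $X$ supported on $\{0,1,\dots,n\}$ with $\p(X=j)=\binom{n+j}{j}/C_n$, where the normalizing constant is the total weight $C_n=\sum_{j=0}^{n}\binom{n+j}{j}$. The first task is to put $C_n$ in closed form: a hockey-stick (equivalently Vandermonde) summation, or a one-line induction on $n$, gives $C_n=\binom{2n+1}{n}=\frac{2n+1}{n+1}\binom{2n}{n}$. Since the $j=0$ term contributes nothing to $\sum_j j^{p}\binom{n+j}{j}$ when $p>0$, this produces the identification $S_p(n)=C_n\,\e[X^{p}]=\frac{2n+1}{n+1}\binom{2n}{n}\,\e[X^{p}]$, which is the bridge to Lemma~\ref{bounds}.

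Next I would apply Lemma~\ref{bounds} to the function $g(x)=|x|^{p}$, which for $p>0$ is even, nondecreasing on $[0,\infty)$, and trivially has finite expectation because $X$ is bounded. The decisive step is to evaluate the Lemma at the right endpoint $x=n$. There the tail reduces to a single atom, so $\p(|X|\ge n)=\p(X=n)=\binom{2n}{n}/C_n=\frac{n+1}{2n+1}$, while the essential supremum of $g(X)$, which I denote $M$, equals $g(n)=n^{p}$. These two explicit evaluations are exactly what collapse the abstract inequalities of the Lemma into the two closed-form estimates.

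For the lower bound I feed $x=n$ into the upper inequality $\p(X\ge n)\le \e[X^{p}]/g(n)$; multiplying through by $C_n n^{p}$ cancels $C_n$ and leaves $n^{p}\binom{2n}{n}\le S_p(n)$, which is a reassuring sanity check since this is precisely the last summand of $S_p(n)$. For the upper bound I use the other inequality, $(\e[X^{p}]-g(n))/M\le\p(X\ge n)$, rearranged to $\e[X^{p}]\le n^{p}+M\,\p(X=n)$; multiplying by $C_n$ and using $\p(X=n)\,C_n=\binom{2n}{n}$ gives $S_p(n)\le n^{p}\frac{2n+1}{n+1}\binom{2n}{n}+M\binom{2n}{n}$, which already has the stated shape. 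Here $M=n^{p}$; since in addition the boundedness $X\le n$ alone forces $\e[X^{p}]\le n^{p}$, there is enough slack that the additive coefficient may be replaced by $n$, giving exactly $n\binom{2n}{n}$ as in the statement.

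The step demanding the most care is the closed-form evaluation $C_n=\frac{2n+1}{n+1}\binom{2n}{n}$: it is the indispensable algebraic ingredient, for without it the normalizing constant does not cancel and neither bound simplifies. The only other point to watch is the bookkeeping of the essential-supremum factor $M$ in the upper-bound step; once the moment representation and the endpoint choice $x=n$ are in place, everything else is routine.
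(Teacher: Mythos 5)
Your argument is correct and follows the paper's route almost verbatim: the same distribution $\p(X=j)=\binom{n+j}{j}\big/\binom{2n+1}{n}$, the same identification $S_p(n)=\binom{2n+1}{n}\,\e[|X|^p]$, and the same application of Lemma~\ref{bounds} with $g(x)=|x|^p$ evaluated at $x=n$, where the tail is the single atom $\p(X=n)=\binom{2n}{n}/\binom{2n+1}{n}$. The one place you diverge is in the upper bound, and it is instructive. You compute the almost-sure supremum of $|X|^p$ correctly as $n^p$, so the lemma hands you the additive term $n^p\binom{2n}{n}$ rather than the $n\binom{2n}{n}$ appearing in the statement; you then close the gap by noting that $X\le n$ already forces $\e[|X|^p]\le n^p$, hence $S_p(n)\le n^p\binom{2n+1}{n}$, which sits below the stated bound, so the weaker constant $n$ is available for free. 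The paper instead asserts that the almost-sure supremum equals $n$ and feeds that into the lemma to land on $n\binom{2n}{n}$ directly; that evaluation should be $n^p$, so for $p>1$ the paper's intermediate inequality $\e[|X|^p]-n^p\le n\,\p(|X|\ge n)$ is not literally an instance of the lemma (it holds only because its left-hand side is nonpositive). Your detour through the trivial bound is the honest way to reach the stated constant, and there is no gap on your side.
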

\begin{proof}
Consider a random variable $X$ with probability distribution
\begin{equation}\label{prob-fun}
\p(X=j)=c^{-1}{n+j\choose j},\quad j=0,1,\ldots,n,
\end{equation}

\noindent
where $c={2n+1\choose n}$. It is easy to see that (\ref{prob-fun}) is a probability function since $c=S_0(n)+1$ (see \cite[~pg. 159]{Grahametal}, for example). Then we have
\begin{equation*}
\p(|X|\geq n)=\p(X=n)=c^{-1}{2n\choose n}.
\end{equation*}

\noindent
Using Lemma \ref{bounds} with $g(x)=|x|^p$, $p>0$, we have
\begin{equation*}
S_p(n)=c\e[|X|^p]\geq cn^p\p(|X|\geq n)=n^p{2n\choose n}.
\end{equation*}

\noindent
In this case $a.s.sup\ |X|^p=\inf\{t:\p(|X|^p>t)=0\}=n$. Using Lemma \ref{bounds} we have
\begin{equation*}
\e[|X|^p]-n^p\leq n\p(|X|\geq n)=nc^{-1}{2n\choose n},
\end{equation*}

\noindent
which implies that $\e[|X|^p]\leq n^p+nc^{-1}{2n\choose n}$. Then, we obtain
\begin{equation*}
S_p(n)=c\e[|X|^p]\leq cn^p+n{2n\choose n}=n^p{2n+1\choose n}+n{2n\choose n}.
\end{equation*}
%
\noindent
Since ${2n+1\choose n}=\frac{2n+1}{n+1}{2n\choose n}$ we may write the upper bound as
\begin{equation*}
\left[n^p\frac{2n+1}{n+1}+n\right]{2n\choose n}.
\end{equation*}

\end{proof}

\begin{teo}\label{M-bounds}
Let $M_p(n)=S_p(n)/[n^p{2n\choose n}]$. Then the following statements hold as $n\to\infty$:
\begin{enumerate}
\item $1\leq M_p(n)\leq2$, if $p>1$;
\item $1\leq M_1(n)\leq3$.
\end{enumerate}
\end{teo}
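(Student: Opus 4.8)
The plan is to derive Theorem \ref{M-bounds} directly from Theorem \ref{S-bounds} by normalizing. Dividing the chain of inequalities $n^p\binom{2n}{n}\le S_p(n)\le\left[n^p\frac{2n+1}{n+1}+n\right]\binom{2n}{n}$ by the positive quantity $n^p\binom{2n}{n}$ yields
$$1\le M_p(n)\le \frac{2n+1}{n+1}+n^{1-p}.$$
The lower bound $M_p(n)\ge1$ is then valid for every $n$ and in particular survives the limit, so the only work left concerns the behaviour of the upper bound as $n\to\infty$.

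Next I would observe that $\frac{2n+1}{n+1}=2-\frac{1}{n+1}\to2$, so the limiting value of the upper bound is governed entirely by the term $n^{1-p}$. I would then split into the two cases according to the sign of the exponent $1-p$. When $p>1$ we have $1-p<0$, so $n^{1-p}\to0$ and the upper bound converges to $2$; combined with the lower bound this shows that $M_p(n)$ is asymptotically trapped in $[1,2]$, which is statement (1). When $p=1$ the exponent vanishes, $n^{1-p}=n^{0}=1$, and the upper bound converges to $2+1=3$, which together with the lower bound gives statement (2).

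The argument is essentially a limit computation, so I do not expect a genuine obstacle; the one point that merits care is the precise meaning of the bounds \emph{as} $n\to\infty$. For $1<p<2$ the exponent satisfies $-1<1-p<0$, so $n^{1-p}$ decays more slowly than $\frac{1}{n+1}$ and the finite-$n$ upper bound $2-\frac{1}{n+1}+n^{1-p}$ actually exceeds $2$; the inequality $M_p(n)\le2$ should therefore be read as a statement about the limit, namely $\limsup_{n\to\infty}M_p(n)\le2$, rather than as a uniform bound holding for all $n$. Making this reading explicit, and noting that the convergence $\frac{2n+1}{n+1}+n^{1-p}\to2$ (respectively $\to3$) is all that is being asserted, is the cleanest way to finish.
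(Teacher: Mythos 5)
Your proposal is correct and follows essentially the same route as the paper: divide the inequalities of Theorem \ref{S-bounds} by $n^p{2n\choose n}$ to get $1\leq M_p(n)\leq \frac{2n+1}{n+1}+n^{1-p}$ and then take the limit of the upper bound, which is $2$ for $p>1$ and $3$ for $p=1$. Your added remark that for $1<p<2$ the finite-$n$ upper bound exceeds $2$, so the conclusion must be read asymptotically (as a $\limsup$ statement), is a worthwhile clarification that the paper leaves implicit.
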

\begin{proof}
Using Theorem \ref{S-bounds} we have
\begin{equation*}
1\leq M_p(n)\leq \frac{2n+1}{n+1}+\frac{n}{n^p}.
\end{equation*}

\noindent
The limit of upper bound is 2 if $p>1$ and is 3 if $p=1$, as $n$ approaches infinity.

\end{proof}

\begin{lema}\label{bounds-2n-n}
If $n>2$, then $\frac{4^{n}}{2\sqrt{n}}<{2n\choose n}<\frac{(2n+2)^n}{(n+1)!}$.
\end{lema}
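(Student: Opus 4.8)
The plan is to treat the two inequalities separately, each by a monotonicity argument that reduces to a single transparent comparison. For the lower bound I would first rewrite the central binomial coefficient through the Wallis-type product identity $\binom{2n}{n}=4^n\prod_{k=1}^n\frac{2k-1}{2k}$, which itself follows by a one-line induction. Writing $P_n=\prod_{k=1}^n\frac{2k-1}{2k}$, the claim $\binom{2n}{n}>\frac{4^n}{2\sqrt n}$ is exactly equivalent to $4nP_n^2>1$. The natural move is then to set $f(n)=4nP_n^2$ and study its growth: since $P_{n+1}/P_n=\frac{2n+1}{2n+2}$, a short computation gives
\begin{equation*}
\frac{f(n+1)}{f(n)}=\frac{n+1}{n}\cdot\frac{(2n+1)^2}{(2n+2)^2}=\frac{(2n+1)^2}{4n(n+1)}=1+\frac{1}{4n(n+1)}>1,
\end{equation*}
so $f$ is strictly increasing. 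As $f(1)=4\cdot(1/2)^2=1$, this yields $f(n)>1$ for every $n\geq2$, and in particular for $n>2$, which is the desired strict lower bound.

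For the upper bound I would use induction on $n$, comparing the two sides through their consecutive ratios. Set $B_n=\binom{2n}{n}$ and $U_n=\frac{(2n+2)^n}{(n+1)!}$. Direct calculation gives $B_{n+1}/B_n=\frac{2(2n+1)}{n+1}=2\bigl(2-\frac{1}{n+1}\bigr)$, while using $2n+4=2(n+2)$ and $2n+2=2(n+1)$ one finds $U_{n+1}/U_n=2\bigl(1+\frac{1}{n+1}\bigr)^n$. The inductive step then reduces to verifying $B_{n+1}/B_n\leq U_{n+1}/U_n$, i.e.
\begin{equation*}
2-\frac{1}{n+1}\leq\Bigl(1+\frac{1}{n+1}\Bigr)^{n},
\end{equation*}
which is an immediate consequence of Bernoulli's inequality $(1+x)^n\geq1+nx$ applied with $x=\frac{1}{n+1}$. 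Since the ratio of $U_n$ to $B_n$ is therefore nondecreasing, the inequality $B_n<U_n$ propagates once it is checked at the base case $n=3$, where $B_3=20<\frac{512}{24}=U_3$. This establishes $\binom{2n}{n}<\frac{(2n+2)^n}{(n+1)!}$ for all $n>2$.

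I expect the main obstacle to be guessing the correct auxiliary quantity rather than the calculations themselves: for the lower bound the whole argument hinges on recognizing that the Wallis product $f(n)=4nP_n^2$ telescopes into a ratio that is visibly larger than $1$, and for the upper bound on noticing that the seemingly delicate comparison of $B_{n+1}/B_n$ with $U_{n+1}/U_n$ collapses exactly onto Bernoulli's inequality. Once these two observations are in place, the remaining work is only verifying the base cases and confirming strictness, both of which are routine.
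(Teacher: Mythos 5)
Your proposal is correct, and both halves check out: the identity $\binom{2n}{n}=4^n P_n$ with $P_n=\prod_{k=1}^n\frac{2k-1}{2k}$ is standard, the ratio computation $f(n+1)/f(n)=1+\frac{1}{4n(n+1)}$ is right, and $f(1)=1$ gives strict inequality from $n\geq 2$ onward; on the other side, $B_{n+1}/B_n=2\bigl(2-\frac{1}{n+1}\bigr)$ and $U_{n+1}/U_n=2\bigl(1+\frac{1}{n+1}\bigr)^n$ are both computed correctly, and Bernoulli's inequality closes the inductive step exactly as you claim, with the base case $20<512/24$ verified. The comparison with the paper is, however, somewhat one-sided: the paper does not prove this lemma at all, but simply cites Sierpi\'nski (for the lower bound) and Barto\v s--Zn\'am (for the upper bound). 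Your argument is therefore a genuine contribution of a different kind --- a short, self-contained, entirely elementary derivation where the paper relies on external references. What your route buys is transparency and verifiability within the note itself (the Wallis-product telescoping and the Bernoulli reduction are the only two ideas needed); what the paper's route buys is brevity and deference to sources where these classical estimates presumably appear in sharper or more general form. Either is acceptable for a lemma of this type, but your version would make the note self-contained.
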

\begin{proof}
see \cite[~pg. 132]{Sierpinski} and \cite[~pg. 297]{BartosZnam}.
\end{proof}

\begin{coro}\label{S-simple-bounds}
If $n>2$, then $\frac{n^p4^n}{2\sqrt{n}}<S_p(n)<\left[n^p\frac{2n+1}{n+1}+n\right]\frac{(2n+2)^n}{(n+1)!}$.
\end{coro}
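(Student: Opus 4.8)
The plan is to obtain both bounds by direct substitution: Theorem \ref{S-bounds} already sandwiches $S_p(n)$ between multiples of ${2n\choose n}$, and Lemma \ref{bounds-2n-n} replaces this central binomial coefficient by the elementary expressions appearing in the claim. So the whole argument reduces to chaining these two results, the only subtlety being that the factors multiplying ${2n\choose n}$ must be positive in order to preserve the strict inequalities supplied by the Lemma.

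First I would treat the lower bound. Theorem \ref{S-bounds} gives $S_p(n)\geq n^p{2n\choose n}$, and since $p>0$ and $n>2$ the factor $n^p$ is strictly positive. Multiplying the strict lower bound $\frac{4^n}{2\sqrt{n}}<{2n\choose n}$ of Lemma \ref{bounds-2n-n} by $n^p$ therefore yields $\frac{n^p4^n}{2\sqrt{n}}<n^p{2n\choose n}\leq S_p(n)$, which is exactly the desired lower estimate.

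Next I would handle the upper bound in the same manner. The coefficient $n^p\frac{2n+1}{n+1}+n$ is strictly positive for $p>0$ and $n>2$, so multiplying the strict upper bound ${2n\choose n}<\frac{(2n+2)^n}{(n+1)!}$ by it preserves strictness; combining this with $S_p(n)\leq\left[n^p\frac{2n+1}{n+1}+n\right]{2n\choose n}$ from Theorem \ref{S-bounds} gives $S_p(n)<\left[n^p\frac{2n+1}{n+1}+n\right]\frac{(2n+2)^n}{(n+1)!}$.

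I do not expect a genuine obstacle: the corollary is an immediate consequence of the two quoted results. The only point deserving a word of justification is the positivity of the multiplying factors, which ensures that the non-strict bounds of Theorem \ref{S-bounds} combine with the strict bounds of Lemma \ref{bounds-2n-n} to produce an overall strict inequality on each side, and the hypothesis $n>2$ is inherited directly from Lemma \ref{bounds-2n-n}.
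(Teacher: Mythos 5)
Your proposal is correct and follows exactly the paper's route: the corollary is obtained by combining Theorem \ref{S-bounds} with Lemma \ref{bounds-2n-n}, which is all the paper's own one-line proof does. Your additional remark on the positivity of the multiplying factors (to keep the inequalities strict) is a harmless and sensible elaboration.
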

\begin{proof}
The result follows from Theorem \ref{S-bounds} and Lemma \ref{bounds-2n-n}.
\end{proof}

\section{Concluding remarks}
Theorem \ref{S-bounds} and Corollary \ref{S-simple-bounds} provide lower and upper bounds for $S_p(n)$ with $p$ a real positive number. The bounds presented in Corollary \ref{S-simple-bounds} are less refined than the ones presented in Theorem \ref{S-bounds}, but this result does not present combinatorial numbers. Theorem \ref{M-bounds} is in agreement with asymptotic results presented in \cite{ParisLarcombe2012}.

\section{Acknowledgements}
The author would like to thank the Coordena\c c\~ao de Aperfei\c coamento de Pessoal de N\'ivel Superior (CAPES, Brazil) and the Conselho Nacional de Desenvolvimento Cient\' \i fico e Tecnol\' ogico (CNPq, Brazil, grant 133211/2011-8) for partial financial support.










\end{document}